\theoremstyle{plain}
\newtheorem{prop}{Proposition}
\newtheorem{lem}[prop]{Lemma}
\newtheorem{conj}[prop]{Conjecture}
\theoremstyle{definition}
\newtheorem{defn}[prop]{Definition}
\newtheorem{rem}[prop]{Remark}
\newtheorem{ex}[prop]{Example}
\newcommand\numberthis{\addtocounter{equation}{1}\tag{\theequation}}
\title{Vector-valued Eisenstein series of small weight}
\author{Brandon Williams }
\subjclass[2010]{11F27, 11F30, 11F37}
\address{Department of Mathematics \\ University of California \\ Berkeley, CA 94720}
\email{btw@math.berkeley.edu}
\begin{document}

\nocite{*}

\maketitle

\begin{abstract} We study the (mock) Eisenstein series $E_k$ of weight $k \in \{1,3/2,2\}$ for the Weil representation on an even lattice, defined as the result of Bruinier and Kuss's coefficient formula for the Eisenstein series naively evaluated at $k$. We describe the transformation law of $E_k$ in general. Most of this note is dedicated to collecting examples where the coefficients of $E_k$ contain interesting arithmetic information. Finally we make a few remarks about the case $k=1/2$.
\end{abstract}

\begin{small} \keywordsname{: Modular forms; Mock modular forms; Eisenstein series; Weil representation} \end{small}

\baselineskip=13pt

\section{Introduction}

In \cite{BK}, Bruinier and Kuss give an expression for the Fourier coefficients of the Eisenstein series $E_k$ of weight $k \ge 5/2$ for the Weil representation attached to a discriminant form. These coefficients involve special values of $L$-functions and zero counts of polynomials modulo prime powers, and they also make sense for $k \in \{1,3/2,2\}$. Unfortunately, the $q$-series $E_k$ obtained in this way often fail to be modular forms. In particular, in weight $k = 3/2$ and $k = 2$, the Eisenstein series may be a mock modular form that requires a real-analytic correction in order to transform as a modular form. Many examples of this phenomenon of the Eisenstein series are well-known (although perhaps less familiar in a vector-valued setting). We will list a few examples of this:

\begin{ex} The Eisenstein series of weight $2$ for a unimodular lattice $\Lambda$ is the quasimodular form $$E_2(\tau) = 1 - 24\sum_{n=1}^{\infty} \sigma_1(n) q^n = 1 - 24q - 72q^2 - 96q^3 - 168q^4 - ...$$ where $\sigma_1(n) = \sum_{d | n} d$, which transforms under the modular group by $$E_2\Big( \frac{a \tau + b}{c \tau + d} \Big) = (c \tau + d)^2 E_2(\tau) + \frac{6}{\pi i} c (c \tau + d).$$
\end{ex}

\begin{ex} The Eisenstein series of weight $3/2$ for the quadratic form $q_2(x) = x^2$ is essentially Zagier's mock Eisenstein series: $$E_{3/2}(\tau) = \Big( 1 - 6q - 12q^2 - 16q^3 - ... \Big) \mathfrak{e}_0 + \Big( -4q^{3/4} - 12q^{7/4} - 12q^{11/4} - ... \Big) \mathfrak{e}_{1/2},$$ in which the coefficient of $q^{n/4} \mathfrak{e}_{n/2}$ is $-12$ times the Hurwitz class number $H(n).$ It transforms under the modular group by $$E_{3/2} \Big( \frac{a \tau + b}{c \tau + d} \Big) = (c \tau + d)^{3/2} \rho^*\Big( \begin{pmatrix} a & b \\ c & d \end{pmatrix} \Big) \Big[ E_{3/2}(\tau) - \frac{3}{\pi} \sqrt{\frac{i}{2}} \int_{d/c}^{i \infty} (\tau + t)^{-3/2} \vartheta(t) \, \mathrm{d}t \Big],$$ where $\vartheta$ is the theta series $$\vartheta(\tau) = \sum_{n \in \mathbb{Z}} q^{n^2 / 4} \mathfrak{e}_{n/2}.$$
\end{ex}

\begin{ex} In the Eisenstein series of weight $3/2$ for the quadratic form $q_3(x) = -12x^2,$ the components of $\mathfrak{e}_{1/12}$, $\mathfrak{e}_{5/12},$ $\mathfrak{e}_{7/12}$ and $\mathfrak{e}_{11/12}$ are $$\Big( -3q^{23/24} - 5q^{47/24} - 7q^{71/24} - 8q^{95/24} - 10q^{119/24} - 10q^{143/24} - ... \Big) \mathfrak{e}_{\gamma}$$ for $\gamma \in \{1/12,5/12,7/12,11/12\}.$ We verified by computer that the coefficient of $q^{n - 1/24}$ above is $(-1)$ times the degree of the $n$-th partition class polynomial considered by Bruinier and Ono \cite{BO} for $1 \le n \le 750$, which is not surprising in view of example $2$ since this degree also counts equivalence classes of certain binary quadratic forms. This Eisenstein series is not a modular form.
\end{ex}

\begin{ex} The Eisenstein series of weight $3/2$ for the quadratic form $q_4(x,y,z) = x^2 + y^2 - z^2$ is a mock modular form that is related to the functions considered by Bringmann and Lovejoy \cite{BL} in their work on overpartitions. More specifically, the component of $\mathfrak{e}_{(0,0,0)}$ in $E_{3/2}$ is $$1 - 2q - 4q^2 - 8q^3 - 10q^4 - ... = 1 - \sum_{n=1}^{\infty} |\overline{\alpha}(n)| q^n,$$ where $\overline{\alpha}(n)$ is the difference between the number of even-rank and odd-rank overpartitions of $n$. Similarly, the $M2$-rank differences considered in \cite{BL} appear to occur in the Eisenstein series of weight $3/2$ for the quadratic form $q_5(x,y,z) = 2x^2 + 2y^2 - z^2$, whose $\mathfrak{e}_{(0,0,0)}$-component is $$1 - 2q - 4q^2 - 2q^4 - 8q^5 - 8q^6 - 8q^7 - ...$$
\end{ex}

\begin{ex} Unlike the previous examples, the Eisenstein series of weight $3/2$ for the quadratic form $q_6(x,y,z) = -x^2 -y^2 -z^2$ is a true modular form; in fact, it is the theta series for the cubic lattice and the Fourier coefficients of its $\mathfrak{e}_{(0,0,0)}$-component count the representations of integers as sums of three squares.
\end{ex}

Among negative-definite lattices of small dimension there are lots of examples where the Eisenstein series equals the theta series. (Note that we find theta series for negative-definite lattices instead of positive-definite because we consider the dual Weil representation $\rho^*$.) When the lattice is even-dimensional this immediately leads to formulas for representation numbers in terms of twisted divisor sums. These formulas are of course well-known but the vector-valued derivations of these formulas seem more natural than the usual derivation as identities among scalar-valued forms of higher level. We give several examples of this throughout the note. \\

In the last section we make some remarks about the case $k=1/2$, where the formula of \cite{BK} no longer makes sense and so the methods of this note break down. \\

\textbf{Acknowledgments:} I thank Kathrin Bringmann for discussing the examples involving overpartition rank differences with me.

\section{Background}

In this section we review some facts about the metaplectic group and vector-valued modular forms, as well as Dirichlet $L$-functions, which will be useful later. \\

Recall that the \textbf{metaplectic group} $Mp_2(\mathbb{Z})$ is the double cover of $SL_2(\mathbb{Z})$ consisting of pairs $(M,\phi),$ where $M = \begin{pmatrix} a & b \\ c & d \end{pmatrix} \in SL_2(\mathbb{Z})$, and $\phi$ is a branch of $\sqrt{c \tau + d}$ on the upper half-plane $$\mathbb{H} = \{\tau = x+iy \in \mathbb{C}: \, y > 0\}.$$ We will usually omit $\phi$. $Mp_2(\mathbb{Z})$ is generated by the elements $$T = \Big( \begin{pmatrix} 1 & 1 \\ 0 & 1 \end{pmatrix}, 1 \Big), \; \; S = \Big( \begin{pmatrix} 0 & -1 \\ 1 & 0 \end{pmatrix}, \sqrt{\tau} \Big),$$ with defining relations $S^8 = I$ and $S^2 = (ST)^3.$ \\

Let $\Lambda$ be a lattice (which we can always take as $\Lambda = \mathbb{Z}^e$ for some $e \in \mathbb{N}$) with an even quadratic form $q : \Lambda \rightarrow \mathbb{Z}$, and let $$\Lambda' = \{v \in \Lambda \otimes \mathbb{Q}^e: \, \langle v,w \rangle \in \mathbb{Z}\, \mathrm{for} \, \mathrm{all} \, w \in \Lambda\}$$ be the dual lattice. We denote by $\mathfrak{e}_{\gamma}$, $\gamma \in \Lambda'/\Lambda$ the natural basis of the group algebra $\mathbb{C}[\Lambda'/\Lambda]$. The \textbf{Weil representation} of $Mp_2(\mathbb{Z})$ attached to $\Lambda$ is the map $$\rho : Mp_2(\mathbb{Z}) \longrightarrow \mathrm{Aut}\, \mathbb{C}[\Lambda'/\Lambda]$$ defined by $$\rho(T) \mathfrak{e}_{\gamma} = \mathbf{e}\Big( q(\gamma) \Big) \mathfrak{e}_{\gamma}, \; \; \rho(S) \mathfrak{e}_{\gamma} = \frac{\sqrt{i}^{b^- - b^+}}{\sqrt{|\Lambda'/\Lambda|}} \sum_{\beta \in \Lambda'/\Lambda} \mathbf{e}\Big( -\langle \gamma, \beta \rangle \Big) \mathfrak{e}_{\beta}.$$ In particular, $$\rho(Z) \mathfrak{e}_{\gamma} = i^{b^- - b^+} \mathfrak{e}_{-\gamma}, \; \; \mathrm{where} \; \; Z = (-I,i) = S^2 = (ST)^3.$$ Here we use $\mathbf{e}(x)$ to denote $e^{2\pi i x},$ and $(b^+,b^-)$ is the signature of $\Lambda.$ \\ We will usually consider the dual representation $\rho^*$ of $\rho$ (which also occurs as the Weil representation itself, for the lattice $\Lambda$ and quadratic form $-q$). \\

A \textbf{modular form of weight $k$} for $\rho^*$ is a holomorphic function $f : \mathbb{H} \rightarrow \mathbb{C}[\Lambda'/\Lambda]$ with the properties: \\ (i) $f$ transforms under the action of $Mp_2(\mathbb{Z})$ by $$f(M \cdot \tau) = (c \tau +d)^k \rho^*(M) f(\tau), \; \; M = \begin{pmatrix} a & b \\ c & d \end{pmatrix} \in Mp_2(\mathbb{Z}),$$ where the branch of $(c \tau + d)^k$ is prescribed by $M$ as an element of $Mp_2(\mathbb{Z})$ if $k$ is half-integer; \\ (ii) $f$ is holomorphic in $\infty$. This means that in the Fourier expansion $$f(\tau) = \sum_{\gamma \in \Lambda'/\Lambda} \sum_{n \in \mathbb{Z} - q(\gamma)} c(n,\gamma) q^n \mathfrak{e}_{\gamma},$$ all coefficients $c(n,\gamma)$ are zero for $n < 0.$ \\

If $N$ is the smallest natural number such that $N \langle \gamma, \beta \rangle$ and $Nq(\gamma) \in \mathbb{Z}$ for all $\beta,\gamma \in \Lambda'/\Lambda,$ then $\rho^*$ factors through $SL_2(\mathbb{Z}/N\mathbb{Z})$ if $e = \mathrm{dim}\, \Lambda$ is even, and through a double cover of $SL_2(\mathbb{Z}/N\mathbb{Z})$ if $e$ is odd. This implies in particular that the component functions $f_{\gamma}$ of $f$ are scalar modular forms of level $N$. \\

When studying the weight $3/2$ Eisenstein series we will consider \textbf{harmonic weak Maass forms}, which have the same transformation behavior as modular forms but for which the holomorphy assumption is weakened to real-analyticity and the weight-$k$ Laplace equation $\Delta f(\tau) = 2iky \frac{\partial}{\partial \overline{\tau}} f(\tau),$ where $\Delta = y^2 ( \frac{\partial^2}{\partial x^2} + \frac{\partial^2}{\partial y^2})$ is the hyperbolic Laplacian on $\mathbb{H}$. Harmonic weak Maass forms are also required to satisfy a growth condition of the form $|f(\tau)| < C e^{Ny}$ at $\infty$. We refer to \cite{BF} and \cite{O} for details. \\

The weights of modular forms are restricted due to $$f(\tau) = f(Z \cdot \tau) = i^{2k} \rho^*(Z) f(\tau) = i^{2k + b^+ - b^-} \sum_{\gamma \in \Lambda'/ \Lambda} f_{-\gamma}(\tau) \mathfrak{e}_{\gamma}.$$ In particular, if $2k + b^+ - b^-$ is not an even integer, then there are no nonzero modular forms. In the case $2k + b^+ - b^- \equiv 2 \, (4)$ (which seems to be of less interest), the components satisfy $f_{\gamma} = -f_{-\gamma}$ and in particular the $\mathfrak{e}_0$-component of $f$ must be zero. We will consider only the case $2k + b^+ - b^- \equiv 0 \, (4)$ as we are interested in Eisenstein series with constant term $1 \cdot \mathfrak{e}_0$. \\

\begin{rem} There is an involution $\sim$ of the metaplectic group given on the standard generators by $$\tilde S = S^{-1}, \; \; \tilde T = T^{-1},$$ which is well-defined because $$(\tilde S \tilde T)^3 = S^{-1} (ST)^{-3} S = S^{-1} S^{-2} S = \tilde S^2$$ and $\tilde S^8 = I.$ On matrices it is given by $$\widetilde{\begin{pmatrix} a & b \\ c & d \end{pmatrix}} = \begin{pmatrix} a & -b \\ -c & d \end{pmatrix},$$ and it acts on the branches of square roots by $\tilde \phi(\tau) = \overline{\phi(-\overline{\tau})},$ where $\phi(\tau)^2 = c \tau + d.$ One can check on the generators $S,T$ that this intertwines the Weil representation $\rho$ and its dual $\rho^*$ in the sense that $$\rho(\tilde M) = \rho^*(M) = \overline{\rho(M)}, \; \; M \in Mp_2(\mathbb{Z}).$$
\end{rem}

\begin{rem} At many points in this note we will need to consider the $L$-function $$L(s,\chi_D) = \sum_{n=1}^{\infty} \chi_D(n) n^{-s}$$ attached to the Dirichlet character mod $|D|$, $$\chi_D(n) = \Big( \frac{D}{n} \Big),$$ where $D$ is a discriminant (i.e. $D \equiv 0,1 \, \bmod \, 4$). In particular, we recall the following properties of Dirichlet $L$-functions. \\ (i) Let $\chi$ be a Dirichlet character. Then $L(s,\chi)$ converges absolutely in some half-plane $\mathrm{Re}[s] > s_0$ and is given by an Euler product $$L(s,\chi) = \prod_{p \, \mathrm{prime}} (1 - \chi(p) p^{-s})^{-1}$$ there. \\ (ii) $L(s,\chi)$ has a meromorphic extension to all $\mathbb{C}$ and satisfies the functional equation $$\Gamma(s) \cos\Big( \frac{\pi (s - \delta)}{2} \Big) L(s,\chi) = \frac{\tau(\chi)}{2i^{\delta}} (2\pi / f)^s L(1-s, \overline{\chi}),$$ where $f$ is the conductor of $\chi$, $\tau(\chi) = \sum_{a=1}^f \chi(a) e^{2\pi i a / f}$ is the Gauss sum of $\chi$, and $$\delta = \begin{cases} 1: & \chi(-1) = -1; \\ 0: & \chi(-1) = 1. \end{cases}$$ (iii) $L(s,\chi)$ is never zero at $s = 1$, and is holomorphic there unless $\chi$ is a trivial character, in which case it has a simple pole. \\ (iv) The values $L(1-n,\chi)$, $n \in \mathbb{N}$ are rational numbers, given by $$L(1-n,\chi) = -\frac{B_{n,\chi}}{n},$$ where $B_{n,\chi} \in \mathbb{Q}$ is a generalized Bernoulli number. \\ We refer to section 4 of \cite{Wa} for these and other results on Dirichlet $L$-functions.
\end{rem}

\section{The real-analytic Eisenstein series}

Fix an even lattice $\Lambda$ and let $\rho^*$ be the dual Weil representation on $\mathbb{C}[\Lambda'/\Lambda].$

\begin{defn} The \textbf{real-analytic Eisenstein series} of weight $k$ is $$E_k^*(\tau,s) = \sum_{M \in \tilde \Gamma_{\infty} \backslash \Gamma} (y^s \mathfrak{e}_0) |_k M = \frac{y^s}{2} \sum_{c,d} (c \tau + d)^{-k} |c \tau + d|^{-2s} \rho^*(M)^{-1} \mathfrak{e}_0.$$ Here, $(c,d)$ runs through all pairs of coprime integers and $M$ is any element $\begin{pmatrix} a & b \\ c & d \end{pmatrix} \in Mp_2(\mathbb{Z})$ with bottom row $(c,d)$; and the branch of $(c \tau + d)^{-k}$ is determined by $M$ as an element of $Mp_2(\mathbb{Z})$ as usual.
\end{defn}

This series converges absolutely and locally uniformly in the half-plane $\mathrm{Re}[s] > 1 - k/2$ and defines a holomorphic function in $s$. For fixed $s$, it transforms under the metaplectic group by $$E_k^*\Big( M \cdot \tau,s \Big) = (c \tau + d)^k \rho^*(M) E_k^*(\tau,s)$$ for any $M = \begin{pmatrix} a & b \\ c & d \end{pmatrix} \in Mp_2(\mathbb{Z}).$ These series were considered by Bruinier and K\"uhn \cite{BK2} in weight $k \ge 2$ who also give expressions for their Fourier expansions. (More generally they consider the series obtained after replacing $\mathfrak{e}_0$ with $\mathfrak{e}_{\beta}$ for an element $\beta \in \Lambda'/\Lambda$ with $q(\beta) \in \mathbb{Z}$. We do not do this because it seems to make the formulas below considerably more complicated, and because for many discriminant forms $\Lambda'/\Lambda$ one can obtain the real-analytic Eisenstein series associated to any $\beta$ from the $E_k^*(\tau,s)$ above by a simple ``averaging" argument. See for example the appendix of \cite{W}.) \\

The series $E_k^*(\tau,s)$ can be analytically extended beyond the half-plane $\mathrm{Re}[s] > 1 - k/2$. We will focus here on weights $k \in \{1,3/2,2\}$, in which the Fourier series is enough to give an explicit analytic continuation to $s=0$. First we work out an expression for the Fourier series (in particular, our result below differs in appearance from \cite{BK2} because we use a different computation of the Euler factors). Writing $$E_k^*(\tau,s) = \mathfrak{e}_0 + \sum_{\gamma \in \Lambda'/\Lambda} \sum_{n \in \mathbb{Z}-q(\gamma)} c(n,\gamma,s,y) q^n \mathfrak{e}_{\gamma},$$ a computation analogous to section 1.2.3 of \cite{Br} using the exact formula for the coefficients $\rho(M)_{0,\gamma}$ of the Weil representation cited there shows that \begin{align*} c(n,\gamma,s,y) &= \frac{y^s}{2} \sum_{c \ne 0} \sum_{d \in (\mathbb{Z}/c\mathbb{Z})^{\times}} \rho(M)_{0,\gamma} \int_{-\infty + iy}^{\infty + iy} (c \tau + d)^{-k} |c \tau + d|^{-2s} \mathbf{e}(-n\tau) \, \mathrm{d}x \\ &= y^s \sum_{c=1}^{\infty} \sum_{d \in (\mathbb{Z}/c\mathbb{Z})^{\times}} \rho(M)_{0,\gamma} c^{-k-2s} \mathbf{e}\Big( \frac{nd}{c} \Big) \int_{-\infty + iy}^{\infty + iy} \tau^{-k} |\tau|^{-2s} \mathbf{e}(-n\tau) \, \mathrm{d}x \\ &= \numberthis \frac{\sqrt{i}^{b^- - b^+}}{\sqrt{|\Lambda'/\Lambda|}} \tilde L(n,\gamma,k + e/2 + 2s) I(k,y,n,s), \end{align*} where $M$ is any element of $Mp_2(\mathbb{Z})$ whose bottom row is $(c,d).$ Here, $\tilde L(n,\gamma,s)$ is the $L$-series 

\begin{align*} \tilde L(n,\gamma,s) &= \sum_{c=1}^{\infty} c^{-s+e/2} \sum_{d \in (\mathbb{Z}/c\mathbb{Z})^{\times}} \rho(M)_{0,\gamma} \mathbf{e}\Big( \frac{nd}{c} \Big) \\ &= \sum_{c=1}^{\infty} c^{-s} \sum_{\substack{v \in \Lambda/c \Lambda \\ d \in (\mathbb{Z}/c \mathbb{Z})^{\times}}} \mathbf{e}\Big( \frac{a q(v) - \langle \gamma,v \rangle + d q(\gamma) - nd}{c} \Big) \\ &= \sum_{c=1}^{\infty} c^{-s} \sum_{a | c} \Big[  \mu(c/a) a (c/a)^e \cdot \# \Big\{ v \in \Lambda/a \Lambda: \, q(v-\gamma) + n \equiv 0 \, (\bmod \,a) \Big\} \Big] \\ &= \zeta(s-e)^{-1} L(n,\gamma,s-1), \end{align*} where $L(n,\gamma,s)$ is $$L(n,\gamma,s) = \sum_{a=1}^{\infty} a^{-s} \mathbf{N}(a) = \prod_{p\, \mathrm{prime}} \Big( \sum_{\nu=0}^{\infty} p^{-\nu s} \mathbf{N}(p^{\nu}) \Big) = \prod_{p \, \mathrm{prime}} L_p(n,\gamma,s),$$ and $\mathbf{N}(p^{\nu})$ is the number of zeros $v \in \Lambda/p^{\nu} \Lambda$ of the quadratic polynomial $q(v-\gamma) + n$; and $I(k,y,n,s)$ is the integral \begin{align*} I(k,y,n,s) &=y^s \int_{-\infty + iy}^{\infty + iy} \tau^{-k} |\tau|^{-2s} \mathbf{e}(-n\tau) \, \mathrm{d}x \\ &= y^{1-k-s} e^{2\pi n y} \int_{-\infty}^{\infty} (t+i)^{-k} (t^2 + 1)^{-s} \mathbf{e}(-nyt) \, \mathrm{d}t, \; \; \tau = y(t+i). \end{align*}

\begin{rem} Both the $L$-series term $\tilde L(n,\gamma,s)$ and the integral term $I(k,y,n,s)$ of (1) have meromorphic continuations to all $s \in \mathbb{C}.$ First we remark that the integral $I(k,y,n,s)$ was considered by Gross and Zagier \cite{GZ}, section IV.3., where it was shown that for $n \ne 0$, $I(k,y,n,s)$ is a finite linear combination of $K$-Bessel functions (we will not need the exact expression) and its value at $s=0$ is given by \begin{equation} I(k,y,n,0) = \begin{cases} 0: & n < 0; \\ (-2\pi i)^k n^{k-1} \frac{1}{\Gamma(k)}: & n > 0; \end{cases} \end{equation} if $n \ne 0$; and when $n = 0$, \begin{equation} I(k,y,0,s) = \pi (-i)^k 2^{2-k-2s} y^{1 - k - s} \frac{\Gamma(2s + k - 1)}{\Gamma(s) \Gamma(s+k)}. \end{equation} In particular, the zero value of the latter expression is $$I(k,y,0,0) = \begin{cases} 0: & k \ne 1; \\ -i \pi: & k = 1. \end{cases}$$

The Euler factors $L_p(n,\gamma,s) = \sum_{\nu=0}^{\infty} p^{-\nu s} \mathbf{N}(p^{\nu})$ are known to be rational functions in $p^{-s}$ that can be calculated using the methods of \cite{CKW} (see also section 6 of \cite{W} as well as the appendix, where the result of the case $p=2$ was worked out). For generic primes (primes $p \ne 2$ that do not divide $|\Lambda'/\Lambda|,$ or the numerator or denominator of $n$ if $n \ne 0$) the result is that $$L_p(n,\gamma,s) = \begin{cases} \frac{1}{1 - p^{e-1-s}} \Big[ 1 - \Big( \frac{D}{p} \Big) p^{e/2 - s} \Big]: & n \ne 0; \\ \\ \frac{1 - \Big( \frac{D'}{p} \Big) p^{e/2 - 1 - s}}{(1 - p^{e-1-s}) \Big[ 1 - \Big( \frac{D'}{p} \Big) p^{e/2 - s} \Big]}: & n = 0; \end{cases}$$ if $e$ is even and $$L_p(n,\gamma,s) = \begin{cases} \frac{1}{1 - p^{e-1-s}} \Big[ 1 + \Big( \frac{\mathcal{D}'}{p} \Big) p^{(e-1)/2 - s} \Big]: & n \ne 0; \\ \\ \frac{1 - p^{e-1-2s}}{(1 - p^{e-1-s})(1 - p^{e-2s})}: & n = 0; \end{cases}$$ if $e$ is odd. Here, $D'$ and $\mathcal{D}'$ are defined by $$D' = (-1)^{k} |\Lambda'/\Lambda| \; \; \mathrm{and} \; \; \mathcal{D}' = 2nd_{\gamma}^2 (-1)^{k - 1/2} |\Lambda'/\Lambda|.$$ 

In particular, if we define $D = D' \cdot \prod_{\mathrm{bad}\, p} p^2$ and $\mathcal{D} = \mathcal{D}' \cdot \prod_{\mathrm{bad}\, p} p^2$, where the bad primes are $2$ and any prime dividing $|\Lambda'/\Lambda|$ or $n$, then we get the meromorphic continuations $$\tilde L(n,\gamma,s) = \begin{cases} \frac{1}{L(s - e/2, \chi_D)} \prod_{\mathrm{bad}\, p} (1 - p^{e-s}) L_p(n,\gamma,s-1): & n \ne 0; \\ \\ \frac{L(s-1-e/2,\chi_D)}{L(s-e/2,\chi_D)} \prod_{\mathrm{bad}\, p} (1 - p^{e-s}) L_p(s-1): & n = 0; \end{cases}$$ if $e$ is even and $$\tilde L(n,\gamma,s) = \begin{cases} \frac{L(s-(e+1)/2, \chi_{\mathcal{D}})}{\zeta(2s-1-e)} \prod_{\mathrm{bad}\, p} \frac{1 - p^{e-s}}{1 - p^{e+1-2s}} L_p(n,\gamma,s-1): & n \ne 0; \\ \\ \frac{\zeta(2s-2-e)}{\zeta(2s-1-e)} \prod_{\mathrm{bad}\, p} \frac{(1 - p^{e-s})(1 - p^{e+2-2s})}{1 - p^{e+1-2s}} L_p(s-1): & n = 0; \end{cases}$$ if $e$ is odd.
\end{rem}

\begin{rem} We denote by $E_k$ the series $$E_k(\tau) = \mathfrak{e}_0 + \sum_{\gamma \in \Lambda'/\Lambda} \sum_{n > 0} c(n,\gamma,0,y) q^n \mathfrak{e}_{\gamma}.$$ The formula (2) gives $I(k,y,n,0) = (-2\pi i)^k n^{k-1} \frac{1}{\Gamma(k)}$ independently of $y$, and so $E_k(\tau)$ is holomorphic. When $k > 2$, this is just the zero-value $E_k(\tau) = E_k^*(\tau,0)$ and therefore $E_k$ is a modular form. In small weights this tends to fail because the terms $$\lim_{s \rightarrow 0} \tilde L(n,\gamma,k+e/2+2s) I(k,y,n,s)$$ may have a pole of $\tilde L$ cancelling the zero of $I$ for $n \le 0$, resulting in nonzero (and often nonholomorphic) contributions to $E_k^*(\tau,0).$
\end{rem}

\begin{rem} Suppose the dimension $e$ is even; then we can apply theorem 4.8 of \cite{BK} to get a simpler coefficient formula. (The condition $k = e/2$ there is only necessary for their computation of local $L$-factors, which we do not use.) It follows that the coefficient $c(n,0)$ of $q^n \mathfrak{e}_0$ in $E_k$ is $$c(n,0) = \frac{(2\pi)^k}{L(k,\chi_D) \sqrt{|\Lambda'/\Lambda|} \Gamma(k)} \cdot \sigma_{k-1}(n,\chi_D) \cdot \prod_{p | D'}\Big[ (1 - p^{e/2 - k}) L_p(n,0,k + e/2 - 1)\Big],$$ where $\sigma_{k-1}(n,\chi_D)$ is the twisted divisor sum $$\sigma_{k-1}(n,\chi_D) = \sum_{d | n} \chi_D(n/d) d^{k-1}$$ and $D' = 4 |\Lambda'/\Lambda|.$ For a fixed lattice $\Lambda$, the expression $\prod_{p | D'} \Big[ (1 - p^{e/2 - k}) L_p(n,0,k+e/2 - 1) \Big]$ can always be worked out exactly using the method of \cite{CKW}, although this can be somewhat tedious (in particular the case $p=2$, which was worked out explicitly in the appendix of \cite{W}.) A worksheet in SAGE to compute these expressions is available on the author's university webpage, and was used to compute the examples in the following sections. Theorem 4.8 of \cite{BK} also gives an interpretation of the coefficients when $e$ is odd but this is more complicated.
\end{rem}

%\begin{rem} When $e$ is even and $k = e/2$, following theorem 4.8 of [REFERENCE] we can write the coefficient $c(n,0)$, $n \in \mathbb{N}$ of $E_k$ in the form $$c(n,0) = \Big[ \frac{(2\pi)^k (-1)^{b^+ / 2}}{L(k,\chi_D) \sqrt{|\Lambda'/\Lambda|} \Gamma(k)} \Big] \cdot \sigma_{k-1}(n,\chi_D) \prod_{p | (2 \cdot |\Lambda'/\Lambda|)} p^{w_p (1 - 2k)} \mathbf{N}(p^{w_p}),$$ where $\sigma_{k-1}(n,\chi_D)$ is a twisted divisor sum $$\sigma_{k-1}(n,\chi_D) = \sum_{d | n} \chi_D(n/d) d^{k-1}$$ and $w_p = 1 + 2v_p(2n).$
%\end{rem}

\section{Weight $1$}

In weight $1$, the $L$-series term is always holomorphic at $s = 0$. However, the zero-value $$I(1,y,0,0) = -i\pi$$ being nonzero means that $E_k$ still needs a correction term. Setting $s = 0$ in the real-analytic Eisenstein series gives \begin{align*} E_1^*(\tau,0) &= E_1(\tau) - \pi \frac{(-1)^{(2 + b^- - b^+) / 4}}{\sqrt{|\Lambda'/\Lambda|}} \frac{L(0,\chi_D)}{L(1,\chi_D)} \times \\ &\quad\quad\quad\quad \times\sum_{\substack{\gamma \in \Lambda'/\Lambda \\ q(\gamma) \in \mathbb{Z}}} \Big[ \prod_{\mathrm{bad}\, p}   \lim_{s \rightarrow 0} (1 - p^{e/2 - 1 - 2s}) L_p(0,\gamma,e/2 + 2s) \Big] \mathfrak{e}_{\gamma},\end{align*} where $D$ is the discriminant $D = -4|\Lambda'/\Lambda|$ and the bad primes are the primes dividing $D$. In particular, $E_1$ may differ from the true modular form $E_1^*(\tau,0)$ by a constant. (Of course, $E_1^*(\tau,0)$ may be identically zero.) \\

For two-dimensional negative-definite lattices, the corrected Eisenstein series $E_1^*(\tau,0)$ is often a multiple of the theta series. This leads to identities relating representation numbers of quadratic forms and divisor counts. Of course, such identities are well-known from the theory of modular forms of higher level. The vector-valued proofs tend to be shorter since $M_k(\rho^*)$ is generally much smaller than the space of modular forms of higher-level in which the individual components lie, so there is less algebra (although computing the local factors takes some work). We give two examples here.\\

\begin{ex} Consider the quadratic form $q(x,y) = -x^2 - xy - y^2$, with $|\Lambda'/\Lambda| = 3.$ The $L$-function values are $$L(0,\chi_{-12}) = \frac{2}{3}, \; \; L(1,\chi_{-12}) = \frac{\pi \sqrt{3}}{6}$$ and the local $L$-series are $$L_2(0,0,s) = \frac{1 + 2^{-s}}{1 - 2^{2 - 2s}}, \; \; L_3(0,0,s) = \frac{1}{1 - 3^{1-s}}$$ with $$\lim_{s \rightarrow 0} (1 - 2^{-2s}) L_2(0,0,1 + 2s) = \frac{3}{4}, \; \; \lim_{s \rightarrow 0} (1 - 3^{-2s}) L_3(0,0,1+2s) = 1,$$ and therefore $E_1^*(\tau,0) = E_1(\tau) + \mathfrak{e}_0.$ Since $M_1(\rho^*)$ is one-dimensional, comparing constant terms shows that $$E_1(\tau) + \mathfrak{e}_0 = 2\vartheta.$$ Using remark 11, we find that the coefficient $c(n,0)$ of $q^n \mathfrak{e}_0$ in $E_1$ is \begin{align*} c(n,0) &= \frac{2\pi}{L(1,\chi_{-12}) \cdot \sqrt{3}} \cdot \sigma_0(n,\chi_{-12}) \times \\ &\quad \quad \quad \quad \times \underbrace{\begin{cases} 3/2: & v_2(n)\, \mathrm{even}; \\ 0: & v_2(n) \, \mathrm{odd}; \end{cases}}_{\mathrm{local}\, \mathrm{factor}\, \mathrm{at}\, 2} \cdot \underbrace{\begin{cases} 2: & n \ne (3a+2) 3^b \; \mathrm{for}\; \mathrm{any} \; a,b \in \mathbb{N}_0; \\ 0: & n = (3a + 2)3^b \; \mathrm{for} \; \mathrm{some} \; a,b \in \mathbb{N}_0; \end{cases}}_{\mathrm{local}\, \mathrm{factor} \, \mathrm{at}\, 3} \\ \\ &= 12 \Big[\sum_{d | n} \Big( \frac{-12}{d} \Big) \Big] \cdot \begin{cases} 1: & n \ne (3a+2)3^b; \\ 0: & n = (3a+2)3^b. \end{cases}\end{align*} This implies the identity \begin{align*} &\quad \#\{(a,b) \in \mathbb{Z}^2: \, a^2 + ab + b^2 = n \} \\ &= 6 \varepsilon \cdot \Big( \#\{\mathrm{divisors} \, d = 6 \ell + 1 \, \mathrm{of} \, n\} - \#\{\mathrm{divisors} \, d = 6 \ell - 1 \, \mathrm{of} \, n\} \Big), \end{align*} valid for $n \ge 1$, where $\varepsilon = 1$ unless $n$ has the form $(3a+2)3^b$ for $a,b \in \mathbb{N}_0$, in which case $\varepsilon = 0.$
\end{ex}

\begin{ex} Consider the quadratic form $q(x,y) = -x^2 - y^2$, with $|\Lambda'/\Lambda| = 4$ and $\chi_{-16} = \chi_{-4}.$ The $L$-function values are $$L(0,\chi_{-4}) = \frac{1}{2}, \; \; L(1,\chi_{-4}) = \frac{\pi}{4},$$ and the only bad prime is $2$ with $L_2(0,0,s) = \frac{1}{1 - 2^{1-s}}$ and therefore $$\lim_{s \rightarrow 0} (1 - 2^{e/2 - 1 - 2s}) L_2(0,0,e/2 + 2s) = 1.$$ Therefore, $$E_1^*(\tau,0) = E_1(\tau) + \mathfrak{e}_0.$$ Since $M_1(\rho^*)$ is one-dimensional, comparing constant terms gives $E_1(\tau) + \mathfrak{e}_0 = 2 \vartheta(\tau).$

By remark 11, the coefficient $c(n,0)$ of $q^n \mathfrak{e}_0$ in $E_1$ is $$c(n,0) = \frac{2\pi}{L(1,\chi_{-4}) \cdot 2} \cdot \sigma_0(n,\chi_{-4}) \cdot \underbrace{\begin{cases}2 : & \Big( \frac{-4}{n} \Big) \ne -1; \\ 0: & \Big( \frac{-4}{n} \Big) = -1; \end{cases}}_{\mathrm{local}\, \mathrm{factor} \, \mathrm{at}\, 2} = 8 \sum_{d | n} \Big( \frac{-4}{d} \Big),$$ and therefore \begin{align*} &\quad \#\Big\{ (a,b) \in \mathbb{Z}^2: \, a^2 + b^2 = n \Big\} = 4 \sum_{d | n} \Big( \frac{-4}{d} \Big) \\ &= 4 \cdot \Big( \#\{\mathrm{divisors} \, d = 4\ell + 1 \, \mathrm{of} \, n\} - \#\{\mathrm{divisors}\, d = 4\ell + 3 \, \mathrm{of} \, n\} \Big). \end{align*}
\end{ex}

%\begin{rem} When the lattice $\Lambda$ is not negative-definite and there is no negative-definite two-dimensional lattice with the same discriminant group, $E_1^*(\tau,0)$ is often identically zero. However, this is certainly not always the case: for example, for the Gram matrix $S = -5 \cdot \begin{pmatrix} -2 & 1 & 1 & 1 \\ 1 & -2 & 1 & 1 \\ 1 & 1 & -2 & 1 \\ 1 & 1 & 1 & -2 \end{pmatrix}$
%\end{rem}

%\begin{rem} Similarly to the case of scalar-valued Eisenstein series, relations between vector-valued Eisenstein series lead to identities of twisted divisor sums that are difficult to prove by other means. For the quadratic form $q(x,y) = -x^2 - y^2$, the coefficient $c_k(n,0)$ of $q^n \mathfrak{e}_{0,0}$, $n > 0$ in the Eisenstein series of weight $k$ is $$c_k(n,0) = \frac{(2\pi)^k (-1)^{(k-1)/2}}{2 \cdot (k-1)! L(k,\chi_{-4})} \tilde \sigma_{k-1}(n),$$ where $$\tilde \sigma_{k-1}(n) = \Big[ \sum_{d | n} \chi_{-4}(n/d) d^{k-1} \Big] \cdot \Big[ 1 + 2^{-(a+2)(k-1)} \Big( \frac{-4}{m} \Big) \Big]$$ if $n = 2^a m$ with odd $m$. By comparing the first coefficients we see that $E_3$ is $(-12)$ times the Serre derivative of $\vartheta = \frac{E_1 + 1}{2}$, which implies an identity of the form $$32 \tilde \sigma_2(n) = (12n-1) \tilde \sigma_0(n) + 12 \sigma_1(n) + 24 \sum_{a=1}^{n-1} \sigma_1(a) \tilde \sigma_0(n-a), \; \; n \ge 1.$$
%\end{rem}

\begin{rem} Experimentally one often finds that the weight-$1$ Eisenstein series attached to a discriminant form equals a theta series even in cases where it is impossible to associate a weight $1$ theta series to the discriminant form in a meaningful sense; such relations are almost certainly coincidence resulting from small cusp spaces in weight $1$. For example, the indefinite lattice with Gram matrix $$S = \begin{pmatrix} 2 & -1 & -1 & -1 \\ -1 & 2 & -1 & -1 \\ -1 & -1 & 2 & -1 \\ -1 & -1 & -1 & 2 \end{pmatrix}$$ yields an Eisenstein series in which the component of $\mathfrak{e}_0$ is $$E_1^*(\tau,0) = \frac{2}{3} + 4q  + 4q^3 + 4q^4 + 8q^7 + 4q^9 + ...$$ i.e. $\frac{2}{3}$ times the theta series of the quadratic form $x^2 + xy + y^2$. However, the discriminant form of $S$ has signature $2 \, \bmod \, 8$ and is therefore not represented by a negative-definite lattice whose theta series has weight one. \\

On the other hand, replacing $S$ by $$-3S = \begin{pmatrix} -6 & 3 & 3 & 3 \\ 3 & -6 & 3 & 3 \\ 3 & 3 & -6 & 3 \\ 3 & 3 & 3 & -6 \end{pmatrix}$$ yields an Eisenstein series in which the component of $\mathfrak{e}_0$ is $$E_1^*(\tau,0) = \frac{34}{27} - \frac{4}{9} q + \frac{68}{9} q^3 - \frac{4}{9} q^4 - \frac{8}{9} q^7 + \frac{68}{9} q^9 \pm ...$$ with the surprising property that its coefficients have infinitely many sign changes; in particular, this example should make clear that $E_1^*(\tau,0)$ is not simply a theta series for every lattice.
\end{rem}

\section{Weight $3/2$}

In weight $3/2$, the $L$-series term is \begin{align*} &\quad \tilde L(n,\gamma,3/2 + e/2 + 2s) \\ &= \begin{cases} \frac{L(1+2s,\chi_{\mathcal{D}})}{\zeta(4s+2)} \prod_{\mathrm{bad}\, p} \frac{ 1 - p^{(e-3)/2 - 2s}}{1 - p^{-2 - 4s}} L_p(n,\gamma,1/2 + e/2 + 2s): & n \ne 0; \\ \\ \frac{\zeta(4s+1)}{\zeta(4s+2)} \prod_{\mathrm{bad}\, p} \frac{(1 - p^{(e-3)/2 - 2s})(1 - p^{-1-4s})}{1 - p^{-2-4s}} L_p(n,\gamma,1/2 + e/2 + 2s): & n = 0; \end{cases}\end{align*} and it is holomorphic in $s=0$ unless $n = 0$ or $$\mathcal{D} = -2n d_{\gamma}^2 |\Lambda'/\Lambda| \prod_{\mathrm{bad}\, p} p^2$$ is a square. In these cases, $\tilde L(n,\gamma,3/2 + e/2 + 2s)$ has a simple pole with residue $$\frac{3}{\pi^2} \prod_{\mathrm{bad}\, p}\lim_{s \rightarrow 0} \frac{ (1 - p^{e/2 - 3/2-2s})(1 - p^{-1})}{1 - p^{-2}} L_p(n,\gamma,1/2 + e/2+2s)$$ if $n \ne 0$, and $$\frac{3}{2\pi^2} \prod_{\mathrm{bad}\, p} \lim_{s \rightarrow 0} \frac{(1 - p^{e/2 - 3/2 - 2s})(1 - p^{-1})}{1 - p^{-2}} L_p(n,\gamma,1/2 + e/2 + 2s)$$ if $n = 0.$ \\

This pole cancels with the zero of $I(k,y,n,s)$ at $s=0$, whose derivative there is $$\frac{d}{ds} \Big|_{s=0} I(k,y,n,s) = -16\pi^2 (1+i) y^{-1/2} \beta(4\pi |n|y), \; \; \mathrm{where} \; \beta(x) = \frac{1}{16\pi} \int_1^{\infty} u^{-3/2} e^{-xu} \, \mathrm{d}u,$$ as calculated in \cite{HZ}, section 2.2. This expression is also valid for $n =0$, where it reduces to $$\frac{d}{ds}\Big|_{s=0} I(k,y,0,s) = 2\pi (-i)^{3/2} \frac{d}{ds}\Big|_{s=0} 2^{-1/2-2s} y^{-1/2 - s} \frac{\Gamma(2s+1/2)}{\Gamma(s) \Gamma(s+3/2)} = -\frac{2\pi}{\sqrt{y}} (1+i).$$

Therefore, $E_{3/2}^*(\tau,0)$ is a harmonic weak Maass form that is not generally holomorphic:

\begin{align*}&\quad E_{3/2}^*(\tau,0) \\ &= E_{3/2}(\tau) + \frac{3 (-1)^{(3 + b^+ - b^-)/4} \sqrt{2}}{\pi \sqrt{y |\Lambda'/\Lambda|}} \Big( \sum_{\substack{\gamma \in \Lambda'/\Lambda \\ q(\gamma) \in \mathbb{Z}}} \prod_{p | \#(\Lambda'/\Lambda)} \frac{1 - p^{(e-3)/2}}{1 + p^{-1}} L_p(0,\gamma,1/2 + e/2) \mathfrak{e}_{\gamma}\Big) +  \\ &\quad + \frac{48 (-1)^{(3 + b^+ - b^-)/4} \sqrt{2}}{\sqrt{y |\Lambda'/\Lambda|}}  \sum_{\substack{\gamma \in \Lambda'/\Lambda \\ n \in \mathbb{Z} - q(\gamma) \\ -2n |\Lambda'/\Lambda| = \square}} \Big[ \beta(4\pi |n|y) \prod_{\mathrm{bad}\, p} \frac{1 - p^{(e-3)/2}}{1+ p^{-1}}L_p(n,\gamma,1/2 + e/2) \Big] q^n \mathfrak{e}_{\gamma} ,\end{align*} where $-2n |\Lambda'/\Lambda| = \square$ means that $-2n |\Lambda'/\Lambda|$ should be a rational square. (In particular, the real-analytic correction involves only exponents $n \le 0$.)

\begin{ex} Zagier's Eisenstein series \cite{HZ} occurs as the Eisenstein series for the quadratic form $q(x) = x^2$. The underlying harmonic weak Maass form is \begin{align*} E_{3/2}^*(\tau,0) &= E_{3/2}(\tau) - \frac{3}{\pi \sqrt{y}}\mathfrak{e}_0 - \frac{48}{\sqrt{y}} \sum_{\gamma \in \Lambda'/\Lambda} \sum_{\substack{n \in \mathbb{Z}-q(\gamma) \\ -n = \square}} \beta(4\pi|n| y) \underbrace{\prod_{\mathrm{bad}\, p} \frac{1 - p^{-1}}{1 + p^{-1}} L_p(n,\gamma,1)}_{=1} q^n \mathfrak{e}_{\gamma} \\ &= E_{3/2}(\tau) - \frac{24}{\sqrt{y}} \sum_{n=-\infty}^{\infty} \beta(4\pi (n/2)^2 y) q^{-(n/2)^2} \mathfrak{e}_{n/2}. \end{align*} The coefficient of $q^{n/4}$ in $$E_{3/2}(\tau) = \Big( 1 - 6q - 12q^2 - 16q^3 - ... \Big) \mathfrak{e}_0 + \Big( -4q^{3/4} - 12q^{7/4} - 12q^{11/4} - ... \Big) \mathfrak{e}_{1/2}$$ is $-12$ times the Hurwitz class number $H(n).$ (We obtain Zagier's Eisenstein series in its usual form by summing the components, replacing $\tau$ by $4\tau$ and $y$ by $4y$, and dividing by $-12$.)
\end{ex}

\begin{rem} We can use essentially the same argument as Hirzebruch and Zagier \cite{HZ} to derive the transformation law of the general $E_{3/2}.$ Write $E_{3/2}^*(\tau,0)$ in the form $$E_{3/2}^*(\tau,0) = E_{3/2} + \frac{1}{\sqrt{y}} \sum_{\gamma \in \Lambda'/\Lambda} \sum_{\substack{n \in \mathbb{Z}-q(\gamma) \\ n \le 0}} a(n,\gamma) \beta(-4\pi n y) q^n \mathfrak{e}_{\gamma}$$ with coefficients $a(n,\gamma).$ Applying the $\xi$-operator $\xi = y^{3/2} \overline{\frac{\partial}{\partial \overline{\tau}}}$ of \cite{BF} to $E_{3/2}^*(\tau,0)$ and using $$\frac{d}{dy} \Big[ \frac{1}{\sqrt{y}} \beta(y)\Big] = \frac{1}{16\pi} \frac{d}{dy} \Big[ \int_y^{\infty} v^{-3/2} e^{-v}\, \mathrm{d}v \Big] = -\frac{1}{16\pi} y^{-3/2} e^{-y}$$ shows that the ``shadow" $$\vartheta(\tau) = \sum_{\gamma,n} a(n,\gamma) q^{-n} \mathfrak{e}_{\gamma}$$ is a modular form of weight $1/2$ for the representation $\rho$ (not its dual!), and \begin{align*} E_{3/2}^*(\tau,0) - E_{3/2}(\tau) &= y^{-1/2} \sum_{\gamma \in \Lambda'/\Lambda} \sum_{n \in \mathbb{Z} - q(\gamma)} a(n,\gamma) \beta(-4\pi n y) q^n \mathfrak{e}_{\gamma} \\ &= \frac{1}{16\pi} y^{-1/2} \int_1^{\infty} \sum_{\gamma,n} u^{-3/2} e^{-4\pi n u y} q^n \mathfrak{e}_{\gamma} \, \mathrm{d}u \\ &= \frac{1}{16\pi} y^{-1/2} \int_1^{\infty} u^{-3/2} \vartheta(2iuy - \tau) \, \mathrm{d}u \\ &= \frac{\sqrt{2i}}{16\pi} \int_{-x+iy}^{i \infty} (v + \tau)^{-3/2} \vartheta(v) \, \mathrm{d}v, \; \; v = 2iuy - \tau.\end{align*}

For any $M = \begin{pmatrix} a & b \\ c & d \end{pmatrix} \in Mp_2(\mathbb{Z}),$ defining $\tilde M = \begin{pmatrix} a & -b \\ -c & d \end{pmatrix}$ as in remark 6 and substituting $v = \tilde M \cdot t$ gives \begin{align*} E_{3/2}^*(M \cdot \tau,0) - E_{3/2}(M \cdot \tau) &= \frac{\sqrt{2i}}{16\pi} \int_{-\overline{M \cdot \tau}}^{i \infty} \Big( \frac{a \tau + b}{c \tau + d} + v \Big)^{-3/2} \vartheta(v) \, \mathrm{d}v \\ &= \frac{\sqrt{2i}}{16\pi} \int_{-\overline{\tau}}^{d/c} \Big( \frac{a \tau + b}{c \tau + d} + \frac{a t - b}{-ct + d} \Big)^{-3/2} \vartheta(\tilde M \cdot t) \, \frac{\mathrm{d}t}{(ct - d)^2} \\ &= \frac{\sqrt{2i}}{16\pi} (c \tau + d)^{3/2} \int_{-\overline{\tau}}^{d/c} (\tau + t)^{-3/2} \rho(\tilde M) \vartheta(t) \, \mathrm{d}t \\ &= (c \tau + d)^{3/2} \rho^*(M) \Big[ \frac{\sqrt{2i}}{16\pi} \int_{-\overline{\tau}}^{d/c} (\tau + t)^{-3/2} \vartheta(t) \, \mathrm{d}t \Big]. \end{align*} Since $E_{3/2}^*(M \cdot \tau,0) = (c \tau + d)^{3/2} \rho^*(M) E_{3/2}^*(\tau,0)$, we conclude that \begin{equation} E_{3/2}(M \cdot \tau) = (c \tau + d)^{3/2} \rho^*(M) \Big[ E_{3/2}(\tau) + \frac{\sqrt{2i}}{16 \pi} \int_{d/c}^{i \infty} (\tau + t)^{-3/2} \vartheta(t) \, \mathrm{d}t \Big]. \end{equation}
\end{rem}

\begin{rem} The transformation law (6) can be used to give an easier sufficient condition for when $E_{3/2}$ is actually a modular form. For example, one can show that $M_{1/2}(\rho) = 0$ for the quadratic form $q(x,y,z) = -x^2 - y^2 - z^2$, which implies that the series $\vartheta$ defined above must be identically $0$ and therefore $$E_{3/2}(M \cdot \tau) = (c \tau + d)^{3/2} \rho^*(M) E_{3/2}(\tau),$$ so $E_{3/2}$ is a true modular form. (In this case, the local $L$-series $L_2(n,\gamma,2+2s)$ at $p=2$ is holomorphic at $s=0$, and therefore the factor $(1 - 2^{-2s})$ annihilates the $L$-series term $\tilde L(n,\gamma,3/2 + e/2)$ in the shadow.) This must be the theta series because $M_{3/2}(\rho^*)$ is one-dimensional. \\

It may be worth pointing out that the coefficient formulas (\cite{BK}, theorem 4.8) for this theta series and for the Zagier Eisenstein series are nearly identical, since the squarefree parts of their discriminant and the ``bad primes'' are the same: the only real difference between them is the local factor at $2$. For odd integers $n$, the local factor at $2$ is easily computed and in both cases depends only on the remainder of $n$ mod $8$, so the coefficients $r_3(n)$ of the theta series and $H(4n)$ of the Zagier Eisenstein series within these congruence classes are proportional. Specifically, $$r_3(n) = 12 H(4n), \; n \equiv 1,5 \, (8); \; \; r_3(n) = 6 H(4n), \; n \equiv 3\, (8); \; \; r_3(n) = 0, \; n \equiv 7 \, (8).$$ These identities are well-known and were already proved by Gauss.
\end{rem}

\begin{ex} Even when $M_{1/2}(\rho) \ne 0$, we can identify $\vartheta$ in $M_{1/2}(\rho)$ by computing finitely many coefficients. Let $q$ be the quadratic form $q(x,y,z) = x^2 + y^2 - z^2.$ The space $M_{1/2}(\rho)$ is always spanned by unary theta series embedded into $\mathbb{C}[\Lambda'/\Lambda]$ (as proven by Skoruppa \cite{SK}) and in this case one can find the basis \begin{align*} & \vartheta_1(\tau) = \Big( 1 + 2q + 2q^4 + ... \Big) (\mathfrak{e}_{(0,0,0)} + \mathfrak{e}_{(1/2,0,1/2)}) +\\ &\quad\quad\quad\quad + \Big( 2q^{1/4} + 2q^{9/4} + 2q^{25/4} + ... \Big) (\mathfrak{e}_{(0,1/2,0)} + \mathfrak{e}_{(1/2,1/2,1/2)}), \\ & \vartheta_2(\tau) = \Big( 1 + 2q + 2q^4 + ... \Big) (\mathfrak{e}_{(0,0,0)} + \mathfrak{e}_{(0,1/2,1/2)}) +\\ &\quad\quad\quad\quad + \Big( 2q^{1/4} + 2q^{9/4} + 2q^{25/4} + ... \Big) (\mathfrak{e}_{(1/2,0,0)} + \mathfrak{e}_{(1/2,1/2,1/2)}). \end{align*} The local $L$-series at the bad prime $p=2$ for the constant term $n=0$ are $$(1 - 2^{-2s}) L_p(0,0,2+2s) = \frac{1}{1 - 2^{-1-4s}} \; \; \mathrm{and} \; \;  (1 - 2^{-2s}) L_p(0,\gamma,2+2s) = 1$$ for $\gamma \in \{(1/2,0,1/2),(0,1/2,1/2)\},$ which implies that $$E_{3/2}^*(\tau,0) = E_{3/2}(\tau) - \frac{3}{2\pi \sqrt{y}} \Big( \frac{4}{3} \mathfrak{e}_{(0,0,0)} + \frac{2}{3} \mathfrak{e}_{(1/2,0,1/2)} + \frac{2}{3} \mathfrak{e}_{(0,1/2,1/2)} \Big) + \Big( \mathrm{negative} \, \mathrm{powers} \, \mathrm{of} \, q\Big)$$ and therefore the shadow in equation (6) must be $$\vartheta(\tau) = -8 \Big( \vartheta_1(\tau) + \vartheta_2(\tau) \Big).$$ In particular, the $\mathfrak{e}_0$-component $E_{3/2}(\tau)_0$ of $E_{3/2}(\tau)$ is a mock modular form of level $4$ that transforms under $\Gamma(4)$ by $$E_{3/2}(M \cdot \tau)_0 = (c \tau + d)^{3/2} \Big[ E_{3/2}(\tau)_0 - \frac{\sqrt{2i}}{\pi} \int_{d/c}^{i \infty} ( \tau + t)^{-3/2} \Theta(t) \, \mathrm{d}t \Big], \; \; \Theta(t) = \sum_{n \in \mathbb{Z}} \mathbf{e}(n^2 t).$$ It was shown by Bringmann and Lovejoy \cite{BL} that the series $$\overline{\mathcal{M}}(\tau + 1/2) = 1 - \sum_{n=1}^{\infty} | \overline{\alpha}(n)| q^n = 1 - 2q - 4q^2 - 8q^3 - 10q^4 - ...$$ of example $4$, where $|\overline{\alpha}(n)|$ counts overpartition rank differences of $n$, has the same transformation behavior under the group $\Gamma_0(16)$, which implies that the difference between $\overline{\mathcal{M}}(\tau + 1/2)$ and the $\mathfrak{e}_0$-component of $E_{3/2}$ is a true modular form of level $16$. We can verify that these are the same by comparing all Fourier coefficients up to the Sturm bound.
\end{ex}

\section{Weight $2$}

In weight $k = 2,$ the $L$-series term is $$\tilde L(n,\gamma,2 + e/2 + 2s) = \begin{cases} \frac{1}{L(1+2s,\chi_D)} \prod_{\mathrm{bad}\, p} (1 - p^{e/2 - 2 - 2s}) L_p(n,\gamma,1 + e/2 + 2s): & n \ne 0; \\ \\ \frac{L(1+2s,\chi_D)}{L(2+2s,\chi_D)} \prod_{\mathrm{bad}\, p} (1 - p^{e/2 - 2 - 2s}) L_p(n,\gamma,1 + e/2 + 2s): & n = 0. \end{cases}$$

Since $L(1,\chi)$ is never zero for any Dirichlet character, the only way a pole can occur at $s=0$ is if $n = 0$ and $D = |\Lambda'/\Lambda|$ is square. (In particular, when $|\Lambda'/\Lambda|$ is not square, $E_2$ is a modular form.) \\

Assume that $|\Lambda'/\Lambda|$ is square. Then $$L(1+2s,\chi_D) = \zeta(1+2s) \prod_{\mathrm{bad}\, p} (1 - p^{-1-2s}),$$ and therefore $\tilde L(0,\gamma,2 + e/2 + 2s)$, has a simple pole at $s=0$ with residue \begin{align*} &\quad \mathrm{Res}\Big( \tilde L(0,\gamma,2 + e/2 + 2s), s=0 \Big) \\ &= \frac{1}{2 L(2,\chi_D)} \prod_{\mathrm{bad}\, p}\Big[  (1 - p^{-1}) \lim_{s \rightarrow 0} (1 - p^{e/2 - 2 - 2s}) L_p(0,\gamma,1 + e/2 + 2s) \Big] \\ &= \frac{3}{\pi^2} \lim_{s \rightarrow 0} \prod_{\mathrm{bad}\, p} \frac{1 - p^{e/2 - 2 - 2s}}{1 + p^{-1}} L_p(0,\gamma,1 + e/2 + 2s) \end{align*} for any $\gamma \in \Lambda'/\Lambda$ with $q(\gamma) \in \mathbb{Z}.$ This pole is canceled by the zero of $I(2,y,0,s)$ at $s=0$ which has derivative \begin{align*} \frac{d}{ds} \Big|_{s=0} I(2,y,0,s) &= -2\pi (2y)^{-1} \frac{d}{ds}\Big|_{s=0} (2y)^{-2s} \frac{\Gamma(2s+1)}{\Gamma(s)\Gamma(s+2)} \\ &= -\frac{\pi}{y}, \end{align*} so \begin{equation} E_2^*(\tau,0) = E_2(\tau) - \frac{3}{\pi y \sqrt{|\Lambda'/\Lambda|}} \lim_{s \rightarrow 0} \sum_{\substack{\gamma \in \Lambda'/\Lambda \\ q(\gamma) \in \mathbb{Z}}} \prod_{\mathrm{bad}\, p} \frac{1 - p^{e/2 - 2 - 2s}}{1 + p^{-1}} L_p(0,\gamma,1 + e/2 + 2s) \mathfrak{e}_{\gamma}. \end{equation}

\begin{ex} Let $\Lambda$ be a unimodular lattice. The only bad prime is $p=2$. Using the hyperbolic plane $q(x,y) = xy$ to define $\Lambda$, the local $L$-function is $$L_2(0,0,s) = \frac{1 - 2^{-s}}{(1 - 2^{1-s})^2}$$ with $L_2(0,0,2) = 3$, so we obtain the well-known result $$E_2^*(\tau,0) = E_2(\tau) - \frac{3}{\pi y} \cdot \frac{1 - 1/2}{1 + 1/2} L_2(0,0,2) = E_2(\tau) - \frac{3}{\pi y}.$$
\end{ex}

\begin{rem} We can summarize the above by saying that $$E_2^*(\tau,0) = E_2(\tau) - \frac{1}{y} \sum_{\substack{\gamma \in \Lambda'/\Lambda \\ q(\gamma) \in \mathbb{Z}}} A(\gamma) \mathfrak{e}_{\gamma}$$ is a Maass form for some constants $A(\gamma)$. For $M = \begin{pmatrix} a & b \\ c & d \end{pmatrix} \in Mp_2(\mathbb{Z}),$ since $$E_2^*(M \cdot \tau,0) = (c \tau + d)^2 \rho^*(M) E_2^*(\tau,0),$$ we find the transformation law \begin{align*} E_2(M \cdot \tau) &= E_2^*(M \cdot \tau,0) + \frac{|c \tau + d|^2}{y} \sum_{\substack{\gamma \in \Lambda'/\Lambda \\ q(\gamma) \in \mathbb{Z}}} A(\gamma) \mathfrak{e}_{\gamma} \\ &= (c \tau + d)^2 \Big[ \rho^*(M) E_2(\tau) - 2ic (c\tau + d) \sum_{q(\gamma) \in \mathbb{Z}} A(\gamma) \rho^*(M) \mathfrak{e}_{\gamma} \Big]. \end{align*}
\end{rem}

\begin{ex} The weight-$2$ Eisenstein series for the quadratic form $q(x,y) = x^2 + 3xy + y^2$ is a true modular form because the discriminant $5$ of $q$ is not a square. In particular, the $\mathfrak{e}_0$-component $$1 -30q - 20q^2 - 40q^3 - 90q^4 - 130q^5 - 60q^6 - 120q^7 - 100q^8 - 210q^9 - ...$$  is a modular form of weight $2$ for the congruence subgroup $\Gamma_1(5).$ Using remark 11, we see that the coefficient $c(n)$ of $q^n$ for $n$ coprime to $10$ is $$c(n) = \begin{cases} -30 \sum_{d | n} \Big( \frac{5}{n/d} \Big) d: & n \equiv \pm 1 \, \bmod\, 10; \\ -20 \sum_{d | n} \Big( \frac{5}{n/d} \Big) d : & n \equiv \pm 3 \, \bmod \, 10; \end{cases}$$ with a more complicated expression for other $n$ involving the local factors at $2$ and $5$.
\end{ex}

\begin{ex} The weight-$2$ Eisenstein series for the quadratic form $q(x,y) = 2xy$ is \begin{align*} E_2(\tau) &= \Big( 1 -8q - 40q^2 - 32q^3 - 104q^4 - ... \Big) \mathfrak{e}_{(0,0)} \\ &\quad\quad + \Big( -16q -32q^2 - 64q^3 - 64q^4 - 96q^5 - ... \Big) (\mathfrak{e}_{(0,1/2)} + \mathfrak{e}_{(1/2,0)}) \\ &\quad\quad + \Big( -8q^{1/2} - 32q^{3/2} - 48q^{5/2} - 64q^{7/2} - 104q^{9/2} - ... \Big) \mathfrak{e}_{(1/2,1/2)} \\ &= \Big( 1 - 8 \sum_{n=1}^{\infty} \Big[ \sum_{d | 2n} (-1)^d d \Big] q^n \Big) \mathfrak{e}_{(0,0)} \\ &\quad\quad + \Big( -8 \sum_{n=1}^{\infty} \Big[ \sum_{d | n} (1 - (-1)^{n/d}) d \Big] q^n \Big) (\mathfrak{e}_{(0,1/2)} + \mathfrak{e}_{(1/2,0)}) \\ &\quad\quad + \Big( -8 \sum_{n=0}^{\infty} \sigma_1(2n+1) q^{n+1/2} \Big) \mathfrak{e}_{(1/2,1/2)}. \end{align*} It is not a modular form. (In fact, there are no modular forms of weight $2$ for this lattice.) On the other hand, the real-analytic correction (7) only involves the components $\mathfrak{e}_{\gamma}$ for which $q(\gamma) \in \mathbb{Z}$, i.e. $\mathfrak{e}_{(0,0)}, \mathfrak{e}_{(0,1/2)}, \mathfrak{e}_{(1/2,0)}$, so the components $$1 - 8 \sum_{n=1}^{\infty} \Big[ \sum_{d | 2n} (-1)^d d \Big] q^n, \; \; \sum_{n=1}^{\infty} \Big[ \sum_{d|n} (1 - (-1)^{n/d}) d \Big] q^n$$ are only quasimodular forms of level $4$, while $\sum_{n=0}^{\infty} \sigma_1(2n+1) q^{2n+1}$ is a true modular form.
\end{ex}

\begin{ex} Although the discriminant group of the quadratic form $q(x_1,x_2,x_3,x_4) = -x_1^2 - x_2^2 - x_3^2 - x_4^2$ has square order $16$, the correction term still vanishes in this case. This is because the local $L$-functions for $p=2$, $$L_2(0,\gamma, 3+s) = \begin{cases} \frac{2 + 2^{-s}}{2 - 2^{-s}}: & \gamma = 0; \\ 1: & \gamma = (1/2,1/2,1/2,1/2); \end{cases}$$ are both holomorphic at $s=0$ and therefore annihilated by the term $(1 - p^{4/2 - 2 - 2s})$ at $s=0$. (Another way to see this is that $\sum_{\substack{\gamma \in \Lambda'/\Lambda \\ q(\gamma) \in \mathbb{Z}}} A(\gamma) \mathfrak{e}_{\gamma}$ is invariant under $\rho$ due to the transformation law of $E_2$, but there are no nonzero invariants of $\rho$ in this case.) In fact, the Eisenstein series $E_2$ for this lattice is exactly the theta series as one can see by calculating the first few coefficients. Comparing coefficients of the $\mathfrak{e}_0$-component leads immediately to Jacobi's formula: \begin{align*} \#\{(a,b,c,d) \in \mathbb{Z}^4: \, a^2 + b^2 + c^2 + d^2 = n\} &= \frac{(2\pi)^2}{L(2,\chi_{64}) \cdot 4} \cdot \sigma_1(n,\chi_{64}) \cdot L_2(n,0,3) \\ &= 8 \cdot \Big[ \sum_{d | n} \Big( \frac{4}{n/d} \Big) d \Big] \cdot \begin{cases} 1: & n \, \mathrm{odd}; \\ 3 \cdot 2^{-v_2(n)}: & n \, \mathrm{even}; \end{cases} \\ &= \begin{cases} 8 \sum\limits_{d | n} d: & n \, \mathrm{odd}; \\  \\ 24 \sum\limits_{\substack{d | n \\ d \, \mathrm{odd}}} d : & n \, \mathrm{even}; \end{cases} \end{align*} for all $n \in \mathbb{N}.$
\end{ex}

\section{Weight $1/2$}

%The weight $1/2$ real-analytic Eisenstein series $E_{1/2}^*(\tau,s)$ generally has a simple pole at $s=0$. The easiest way to see this may be to use the weight $3/2$ Bruinier-Funke $\xi$ operator: since $\xi_{3/2} y^s = -\frac{si}{2}y^{s+1/2}$ and $(\xi_{3/2} f)|_{1/2,\rho} M = \xi (f |_{3/2,\rho^*} M)$ for any smooth function $f(\tau)$, we obtain $$\xi E_{3/2}^*(\tau,0) = -\frac{i}{2} \lim_{s \rightarrow 0} s E_{1/2}^*(\tau,1/2).$$

The Fourier expansion (1) is no longer valid in weight $k = 1/2$; in fact, the $L$-series factor in this case is $$\tilde L \Big(n,\gamma, \frac{e+1}{2} + 2s \Big) = \begin{cases} \frac{L_{\mathcal{D}}(2s)}{\zeta(4s)} \prod_{\mathrm{bad}\, p} \frac{1 - p^{(e-1)/2 - 2s}}{1 - p^{-4s}} L_p \Big(n,\gamma, \frac{e-1}{2} + 2s \Big): & n \ne 0; \\ \\ \frac{\zeta(4s-1)}{\zeta(4s)} \prod_{\mathrm{bad}\, p} \frac{(1 - p^{(e-1)/2 - 2s})(1 - p^{1-4s})}{1 - p^{-4s}} L_p \Big(n,\gamma, \frac{e-1}{2} + 2s \Big): & n = 0; \end{cases}$$ which generally has a singularity at $s=0$, so our approach fails in weight $1/2$. \\

Despite this, the weight $1/2$ Eisenstein series $E^*_{1/2}(\tau,s)$ should extend analytically to $s=0$. One way to study $E_{1/2}^*(\tau,s)$ is by applying the Bruinier-Funke operator $\xi_{3/2}$ to the weight $3/2$ series $E_{3/2}^*(\tau,s)$ for the dual representation (i.e. the same lattice with negated quadratic form); from $\xi_{3/2} y^s = -\frac{si}{2} y^{s+1/2}$ one obtains $\xi_{3/2} E_{3/2}^*(\tau,s) = -\frac{si}{2} E_{1/2}^*(\tau,s+1/2)$ for all large enough $s$. Carrying over the arguments from the scalar-valued case (e.g. \cite{DS}, section 4.10) should imply that $E_{1/2}^*(\tau,s)$ will satisfy some functional equation relating $E_{1/2}^*(\tau,s+1/2)$ to $E_{1/2}^*(\tau,-s)$ (or more likely some combination of $E_{1/2,\beta}^*(\tau,-s)$ as $\beta$ runs through elements of $\Lambda'/\Lambda$ with $q(\beta) \in \mathbb{Z}$ in general) although in the half-integer case this seems less straightforward. Assuming this, for large enough $\mathrm{Re}[s]$ it follows that $E_{1/2}^*(\tau,-s)$ should be a linear combination of $\xi_{3/2} E_{3/2,\beta}^*(\tau,s)$ with coefficients depending on $s$ but independent of $\tau$; we might even expect this to hold for arbitrary $s$ and therefore conjecture:

%In particular, using a functional equation for $E_k^*$ should imply that $\xi_{3/2} E_{3/2}^*(\tau,s)$ is proportional to $E_{1/2}^*(\tau,-s)$ by a factor independent of $\tau$; this does not seem completely rigorous, but we might expect this to hold for all $s$ and therefore conjecture:

\begin{conj} The zero-value $E_{1/2}^*(\tau,0)$ for a discriminant form $(\Lambda'/\Lambda, q)$ is a holomorphic modular form of weight $1/2$; moreover it is a linear combination of the shadows of mock Eisenstein series $E_{3/2,\beta}(\tau)$ for $(\Lambda'/\Lambda, -q)$ that has constant term $1 \cdot \mathfrak{e}_0$.
\end{conj}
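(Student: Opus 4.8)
The plan is to express the zero-value $E_{1/2}^*(\tau,0)$ as the image, under the Bruinier--Funke operator $\xi_{3/2}$, of the analytically continued weight-$3/2$ real-analytic Eisenstein series for the negated form $(\Lambda'/\Lambda,-q)$, and then to read off holomorphy and the shadow description directly. Throughout, let $E_{3/2,\beta}^*(\tau,s)$ denote the weight-$3/2$ series with seed $\mathfrak{e}_\beta$ attached to $(\Lambda'/\Lambda,-q)$; these transform under $\rho$, so $\xi_{3/2}E_{3/2,\beta}^*(\tau,0)$ is a holomorphic weight-$1/2$ form for $\overline{\rho}=\rho^*$, which is exactly the transformation type of $E_{1/2}^*(\tau,0)$ for $(\Lambda'/\Lambda,q)$. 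The first step is to record the intertwining relation of the excerpt for every seed: since $\xi_{3/2}(y^s\mathfrak{e}_\beta) = -\tfrac{si}{2}y^{s+1/2}\mathfrak{e}_\beta$ and $\xi_{3/2}(f|_{3/2}M) = (\xi_{3/2}f)|_{1/2}M$, term-by-term application to the defining series gives $\xi_{3/2}E_{3/2,\beta}^*(\tau,s) = -\tfrac{si}{2}E_{1/2,\beta}^*(\tau,s+1/2)$, first for $\mathrm{Re}[s]$ large and then as a meromorphic identity in $s$.

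Next I would specialize the seed-$0$ relation at $s=-1/2$, where the scalar factor $-\tfrac{si}{2}=\tfrac{i}{4}$ is nonzero, to obtain $E_{1/2}^*(\tau,0) = -4i\,\xi_{3/2}E_{3/2,0}^*(\tau,-1/2)$. This already reduces everything to understanding $E_{3/2,0}^*(\tau,-1/2)$, and in particular shows that the continuation of $E_{1/2}^*$ to $s=0$ exists as soon as $E_{3/2,0}^*$ is regular at $s=-1/2$. The hard part is the functional equation of the weight-$3/2$ metaplectic Eisenstein series: carrying the argument of \cite{DS} (section 4.10) into the vector-valued half-integral setting, I expect a relation, reflecting $s$ about the point $-\tfrac14$, whose scattering matrix $(\Psi_\beta)$ mixes the components $\mathfrak{e}_\beta$ over the isotropic $\beta\in\Lambda'/\Lambda$, of the shape $E_{3/2,0}^*(\tau,-1/2) = \sum_\beta \Psi_\beta\,E_{3/2,\beta}^*(\tau,0)$, since the reflection $s\mapsto -\tfrac12-s$ carries $-\tfrac12$ to $0$. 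Combining this with the previous display yields the desired identity $E_{1/2}^*(\tau,0) = \sum_\beta c_\beta\,\xi_{3/2}E_{3/2,\beta}^*(\tau,0)$ with $\tau$-independent constants $c_\beta = -4i\,\Psi_\beta$. (Equivalently, one may phrase this through the weight-$1/2$ functional equation $w\mapsto \tfrac12-w$ as in the discussion preceding the conjecture; that route requires dividing out a factor of $s$ at $s=0$, whereas the weight-$3/2$ route avoids the indeterminacy.)

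Granting the identity, both assertions of the conjecture follow at once. Each $E_{3/2,\beta}^*(\tau,0)$ is a harmonic weak Maass form, as established in the weight-$3/2$ section, so by definition $\xi_{3/2}E_{3/2,\beta}^*(\tau,0)$ is its shadow, a \emph{holomorphic} weight-$1/2$ modular form for $\rho^*$; hence the linear combination $E_{1/2}^*(\tau,0)$ is holomorphic and is manifestly a linear combination of the shadows of the $E_{3/2,\beta}$. As an independent check one can invoke Skoruppa's theorem \cite{Sk}, by which $M_{1/2}(\rho^*)$ is spanned by unary theta series, so holomorphy alone already confines $E_{1/2}^*(\tau,0)$ to this finite-dimensional space.

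Finally I would pin down the normalization. The series is seeded by the single term $y^s\mathfrak{e}_0$, so after continuation the constant Fourier coefficient of $E_{1/2}^*(\tau,0)$ consists of a distinguished $\mathfrak{e}_0$-term together with scattering contributions that are powers of $y$; holomorphy at $s=0$ forces the genuinely $y$-dependent pieces to cancel, leaving constant term exactly $1\cdot\mathfrak{e}_0$. I expect the real obstacle to be the second paragraph --- establishing the half-integral metaplectic functional equation and its scattering matrix $(\Psi_\beta)$, together with regularity of $E_{3/2,0}^*(\tau,s)$ at $s=-\tfrac12$ --- since, as the excerpt notes, the passage from the scalar argument of \cite{DS} is delicate in the presence of the metaplectic cocycle and the mixing of the $\mathfrak{e}_\beta$-components; once the scattering matrix is in hand, the remaining steps are essentially formal.
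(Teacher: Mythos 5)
You should note first that the paper does not prove this statement at all: it is stated as a \emph{conjecture}, supported only by the heuristic discussion preceding it and by a direct verification in one family of examples. Your proposal reproduces that heuristic rather than closing the gap. The intertwining identity $\xi_{3/2}E_{3/2}^*(\tau,s) = -\tfrac{si}{2}E_{1/2}^*(\tau,s+1/2)$ is exactly what the paper records, but your pivot to the specialization $s=-1/2$ does not ``avoid the indeterminacy'' as you claim: by the paper's Fourier expansion, the $L$-series factor of $E_{3/2}^*$ is $\tilde L(n,\gamma,3/2+e/2+2s)$, which at $s=-1/2$ becomes $\tilde L(n,\gamma,(e+1)/2)$ --- literally the same quantity whose singularity at $s=0$ in the weight-$1/2$ expansion $\tilde L(n,\gamma,(e+1)/2+2s)$ is the reason the paper's method breaks down in weight $1/2$. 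So the regularity of $E_{3/2,0}^*(\tau,s)$ at $s=-1/2$ is not a technical lemma to be granted; it relocates, without resolving, the entire analytic-continuation problem. The same applies to your scattering matrix $(\Psi_\beta)$ and the functional equation reflecting about $s=-1/4$: these are postulated, and the paper itself flags that transporting the scalar argument of \cite{DS}, section 4.10, to the vector-valued metaplectic setting ``seems less straightforward.'' Your closing normalization argument is also circular: the cancellation of the $y^{1/2}$ scattering contribution in the constant coefficient is equivalent to holomorphy of the zero value, which is the first assertion of the conjecture, not something holomorphy ``forces'' independently.

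What the paper actually offers as evidence, and what your proposal omits entirely, is the unconditional verification in dimension $e=1$ for $q(x)=-mx^2$ with $m$ squarefree. There the factors $1-p^{1-4s}$ cancel against the numerators, the Fourier series itself continues $E_{1/2}^*(\tau,s)$ to $s=0$ with explicit coefficients $c(n,\gamma)=2\cdot(1/2)^{\varepsilon}$, and the shadow of $E_{3/2}$ is computed by a genuinely different device: the theta decomposition identifies $E_{3/2}^*(\tau,0)$ with a nonholomorphic Jacobi Eisenstein series satisfying $E_{2,m}^* = \sigma_1(m)^{-1}E_{2,1}^*|V_m$ as in chapter 4 of \cite{EZ}, reducing everything to the known Zagier Eisenstein series; comparison then shows $E_{1/2}(\tau)$ equals the shadow up to the factor $-24\sqrt{m}\,\sigma_0(m)/\sigma_1(m)$. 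If you want your argument to carry weight, you should either establish the half-integral vector-valued functional equation you assume, or at minimum carry out a concrete verification of this kind; as written, every step beyond the formal $\xi$-operator computation is an expectation, which is precisely why the paper labels the statement a conjecture.
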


Unfortunately, if this is true then from the point of view of this note there is little motivation to consider $E_{1/2}^*(\tau,0)$ further: modular forms of weight $1/2$ are spanned by what are essentially unary theta series and any resulting identities among coefficients will be uninteresting. There may be interest in higher terms of the Taylor expansion of $E_{1/2}^*(\tau,s)$ in the variable $s$ which might be used to generate mock modular forms of weight $1/2$ and higher depth, but this is outside the scope of this note. \\

%Unfortunately, if this conjecture is true then it implies that $E_{1/2}^*(\tau,0)$ is not very interesting from our elementary point of view, since modular forms of weight $1/2$ are spanned by what are essentially unary theta series and the resulting identities among coefficients are trivial.  \\

There is one class of examples where this conjecture can be verified directly. In dimension $e=1$, where the quadratic form is $q(x) = -mx^2$ for some $m \in \mathbb{N}$, we can make sense of the coefficient formula because the terms $1 - p^{1-4s}$ are cancelled by the numerators at $s=0$, and the Fourier series then provides the analytic continuation of $E_{1/2}^*(\tau,s)$ to $s=0$. The $L$-series factor in this case is $$\tilde L(n,\gamma,1 + 2s) = \begin{cases} \frac{L(2s,\chi_{\mathcal{D}})}{\zeta(4s)} \prod_{\mathrm{bad}\, p} \frac{L_p(n,\gamma,2s)}{1 + p^{-2s}}: & n \ne 0; \\ \\ \frac{\zeta(4s-1)}{\zeta(4s)} \prod_{\mathrm{bad}\, p} \frac{(1 - p^{1-4s}) L_p(n,\gamma,2s)}{1 + p^{-2s}}: & n = 0. \end{cases}$$ Here, $\mathcal{D}$ is the discriminant $$\mathcal{D} = 2d_{\gamma}^2 n |\Lambda'/\Lambda| \prod_{\mathrm{bad}\, p} p^2 = 4mn d_{\gamma}^2 \prod_{\mathrm{bad}\, p} p^2.$$ 

Suppose for simplicity that $m$ is squarefree (and in particular, $\beta = 0$ is the only element of $\Lambda'/\Lambda$ with $q(\beta) \in \mathbb{Z}$). The local $L$-factors can be calculated by elementary means (for example, with Hensel's lemma), and the result in this case is that $E_{1/2}(\tau) = 1 \cdot \mathfrak{e}_0 + \sum_{\gamma \in \Lambda'/\Lambda} \sum_{n \in \mathbb{Z} - q(\gamma)} c(n,\gamma) q^n \mathfrak{e}_{\gamma}$ with $$c(n,\gamma) = 2 \cdot (1/2)^{\varepsilon}, \; \; \varepsilon = \#\{\text{primes} \; p \ne 2 \; \text{dividing} \; d_{\gamma}\} + \begin{cases} 1: & 4 | d_{\gamma} \\ 0: & \text{otherwise}. \end{cases}$$ Here, $d_{\gamma}$ is the denominator of $\gamma$; that is, the smallest number for which $d_{\gamma} \gamma \in \Lambda.$ \\

The shadow of the mock Eisenstein series $E_{3/2}(\tau)$ attached to $mx^2$ can be computed directly as well, although this is more difficult. On the other hand, one can use the following trick: via the theta decomposition, the nonholomorphic weight $3/2$ Eisenstein series $E_{3/2}^*(\tau,0)$ corresponds to a nonholomorphic, scalar Jacobi Eisenstein series $E_{2,m}^*(\tau,z,0)$ of index $m$. The argument of chapter $4$ of \cite{EZ} still applies to this situation and in particular $E_{2,m}^*(\tau,z,0) = \frac{1}{\sigma_1(m)} E_{2,1}^*(\tau,z,0) | V_m$ for the Hecke-type operator $$\Phi | V_m(\tau,z) = m \sum_M (c \tau + d)^{-2} \mathbf{e}\Big( -\frac{cmz^2}{c \tau + d} \Big) \Phi \Big( \frac{a \tau + b}{c \tau + d}, \frac{mz}{c \tau + d} \Big),$$ the sum taken over cosets of determinant-$m$ integral matrices $M$ by $SL_2(\mathbb{Z})$. (Here we must assume that $m$ is squarefree). However, $E_{2,1}^*(\tau,z,0)$ arises through the theta decomposition from the Zagier Eisenstein series and so its coefficients are well-known. In this way one can compute $$E_{3/2}^*(\tau,0) = -\frac{12}{\sigma_1(m)} \sum_{\gamma \in \Lambda'/\Lambda} \sum_{n \in \mathbb{Z} - q(\gamma)} \sum_{a | m} a H(4mn / a^2) q^n \mathfrak{e}_{\gamma} + \frac{1}{\sqrt{y}} \sum_{\gamma \in \Lambda'/\Lambda} \sum_{n \in \mathbb{Z} + q(\gamma)} a(n,\gamma) q^{-n} \mathfrak{e}_{\gamma},$$ where $H(n)$ is the Hurwitz class number (and $H(n) = 0$ if $n$ is noninteger) and the coefficients of the shadow are $$a(n,\gamma) = \begin{cases} -24 \sqrt{m} \frac{\sigma_0(m)}{\sigma_1(m)}: & n = 0; \\ -48 \sqrt{m} \frac{\sigma_0(\mathrm{gcd}(m,n))}{\sigma_1(m)}: & mn = \square, \; mn \ne 0; \\ 0: & \text{otherwise}, \end{cases}$$ and where we use the convention $\mathrm{gcd}(m,n) = \prod_{v_p(m),v_p(n) \ge 0} p^{\min(v_p(m),v_p(n))}$ (e.g. $\mathrm{gcd}(30,3/4) = 3$). Unraveling this, we see that $E_{1/2}(\tau)$ differs from the shadow of $E_{3/2}(\tau)$ by the factor $-24 \sqrt{m} \frac{\sigma_0(m)}{\sigma_1(m)}$.

\bibliographystyle{plainnat}
\bibliography{\jobname}

\begin{thebibliography}{15}
\providecommand{\natexlab}[1]{#1}
\providecommand{\url}[1]{\texttt{#1}}
\expandafter\ifx\csname urlstyle\endcsname\relax
  \providecommand{\doi}[1]{doi: #1}\else
  \providecommand{\doi}{doi: \begingroup \urlstyle{rm}\Url}\fi

\bibitem[Bringmann and Lovejoy(2009)]{BL}
Kathrin Bringmann and Jeremy Lovejoy.
\newblock Overpartitions and class numbers of binary quadratic forms.
\newblock \emph{Proc. Natl. Acad. Sci. USA}, 106\penalty0 (14):\penalty0
  5513--5516, 2009.
\newblock ISSN 1091-6490.
\newblock \doi{10.1073/pnas.0900783106}.
\newblock URL \url{http://dx.doi.org/10.1073/pnas.0900783106}.

\bibitem[Bruinier(2002)]{Br}
Jan Bruinier.
\newblock \emph{Borcherds products on {O}(2, {$l$}) and {C}hern classes of
  {H}eegner divisors}, volume 1780 of \emph{Lecture Notes in Mathematics}.
\newblock Springer-Verlag, Berlin, 2002.
\newblock ISBN 3-540-43320-1.
\newblock \doi{10.1007/b83278}.
\newblock URL \url{http://dx.doi.org/10.1007/b83278}.

\bibitem[Bruinier and Funke(2004)]{BF}
Jan Bruinier and Jens Funke.
\newblock On two geometric theta lifts.
\newblock \emph{Duke Math. J.}, 125\penalty0 (1):\penalty0 45--90, 2004.
\newblock ISSN 0012-7094.
\newblock \doi{10.1215/S0012-7094-04-12513-8}.
\newblock URL \url{http://dx.doi.org/10.1215/S0012-7094-04-12513-8}.

\bibitem[Bruinier and K\"uhn(2003)]{BK2}
Jan Bruinier and Ulf K\"uhn.
\newblock Integrals of automorphic {G}reen's functions associated to {H}eegner
  divisors.
\newblock \emph{Int. Math. Res. Not.}, \penalty0 (31):\penalty0 1687--1729,
  2003.
\newblock ISSN 1073-7928.
\newblock \doi{10.1155/S1073792803204165}.
\newblock URL \url{http://dx.doi.org/10.1155/S1073792803204165}.

\bibitem[Bruinier and Kuss(2001)]{BK}
Jan Bruinier and Michael Kuss.
\newblock Eisenstein series attached to lattices and modular forms on
  orthogonal groups.
\newblock \emph{Manuscripta Math.}, 106\penalty0 (4):\penalty0 443--459, 2001.
\newblock ISSN 0025-2611.
\newblock \doi{10.1007/s229-001-8027-1}.
\newblock URL \url{http://dx.doi.org/10.1007/s229-001-8027-1}.

\bibitem[Bruinier and Ono(2013)]{BO}
Jan Bruinier and Ken Ono.
\newblock Algebraic formulas for the coefficients of half-integral weight
  harmonic weak {M}aass forms.
\newblock \emph{Adv. Math.}, 246:\penalty0 198--219, 2013.
\newblock ISSN 0001-8708.
\newblock \doi{10.1016/j.aim.2013.05.028}.
\newblock URL \url{https://dx.doi.org/10.1016/j.aim.2013.05.028}.

\bibitem[Cowan et~al.(2017)Cowan, Katz, and White]{CKW}
Raemeon Cowan, Daniel Katz, and Lauren White.
\newblock A new generating function for calculating the {I}gusa local zeta
  function.
\newblock \emph{Adv. Math.}, 304:\penalty0 355--420, 2017.
\newblock ISSN 0001-8708.
\newblock \doi{10.1016/j.aim.2016.09.003}.
\newblock URL \url{http://dx.doi.org/10.1016/j.aim.2016.09.003}.

\bibitem[Diamond and Shurman(2005)]{DS}
Fred Diamond and Jerry Shurman.
\newblock \emph{A first course in modular forms}, volume 228 of \emph{Graduate
  Texts in Mathematics}.
\newblock Springer-Verlag, New York, 2005.
\newblock ISBN 0-387-23229-X.

\bibitem[Eichler and Zagier(1985)]{EZ}
Martin Eichler and Don Zagier.
\newblock \emph{The theory of {J}acobi forms}, volume~55 of \emph{Progress in
  Mathematics}.
\newblock Birkh\"auser Boston, Inc., Boston, MA, 1985.
\newblock ISBN 0-8176-3180-1.
\newblock \doi{10.1007/978-1-4684-9162-3}.
\newblock URL \url{http://dx.doi.org/10.1007/978-1-4684-9162-3}.

\bibitem[Gross and Zagier(1986)]{GZ}
Benedict Gross and Don Zagier.
\newblock Heegner points and derivatives of {$L$}-series.
\newblock \emph{Invent. Math.}, 84\penalty0 (2):\penalty0 225--320, 1986.
\newblock ISSN 0020-9910.
\newblock \doi{10.1007/BF01388809}.
\newblock URL \url{http://dx.doi.org/10.1007/BF01388809}.

\bibitem[Hirzebruch and Zagier(1976)]{HZ}
Friedrich Hirzebruch and Don Zagier.
\newblock Intersection numbers of curves on {H}ilbert modular surfaces and
  modular forms of {N}ebentypus.
\newblock \emph{Invent. Math.}, 36:\penalty0 57--113, 1976.
\newblock ISSN 0020-9910.
\newblock \doi{10.1007/BF01390005}.
\newblock URL \url{http://dx.doi.org/10.1007/BF01390005}.

\bibitem[Ono(2009)]{O}
Ken Ono.
\newblock Unearthing the visions of a master: harmonic {M}aass forms and number
  theory.
\newblock In \emph{Current developments in mathematics, 2008}, pages 347--454.
  Int. Press, Somerville, MA, 2009.

\bibitem[Skoruppa(1985)]{SK}
Nils-Peter Skoruppa.
\newblock \emph{\"Uber den {Z}usammenhang zwischen {J}acobiformen und
  {M}odulformen halbganzen {G}ewichts}, volume 159 of \emph{Bonner
  Mathematische Schriften [Bonn Mathematical Publications]}.
\newblock Universit\"at Bonn, Mathematisches Institut, Bonn, 1985.
\newblock Dissertation, Rheinische Friedrich-Wilhelms-Universit\"at, Bonn,
  1984.

\bibitem[Washington(1997)]{Wa}
Lawrence Washington.
\newblock \emph{Introduction to cyclotomic fields}, volume~83 of \emph{Graduate
  Texts in Mathematics}.
\newblock Springer-Verlag, New York, second edition, 1997.
\newblock ISBN 0-387-94762-0.
\newblock \doi{10.1007/978-1-4612-1934-7}.
\newblock URL \url{http://dx.doi.org/10.1007/978-1-4612-1934-7}.

\bibitem[Williams()]{W}
Brandon Williams.
\newblock Poincar\'e square series for the {W}eil representation.
\newblock Preprint.
\newblock URL \url{https://arxiv.org/abs/1704.06758}.

\end{thebibliography}

\end{document}